\newcommand{\bq}{\begin{equation}}
\newcommand{\eq}{\end{equation}}
\newcommand{\bc}{\begin{center}}
\newcommand{\ec}{\end{center}}
\newcommand{\bit}{\begin{itemize}}
\newcommand{\eit}{\end{itemize}}
\newcommand{\ben}{\begin{enumerate}}
\newcommand{\een}{\end{enumerate}}
\theoremstyle{plain}
\newtheorem{theorem}{Theorem}[section]
\newtheorem*{theorem*}{Theorem}
\newtheorem{proposition}[theorem]{Proposition}
\newtheorem{remark}[theorem]{Remark}
\begin{document}

%\journal{\ }
%\journal{Expositiones Mathematicae}
\journal{(internal report CC23-9)}

\begin{frontmatter}

\title{Scaling behavior for the median of the Poisson distribution of order $k$}

\author[cc]{S.~R.~Mane}
\ead{srmane001@gmail.com}
\address[cc]{Convergent Computing Inc., P.~O.~Box 561, Shoreham, NY 11786, USA}

\begin{abstract}
This note analyzes properties of the median $\nu$ of the Poisson distribution of order $k$.
Given a value for the median in the interval $\nu\in[1,k]$, an equation to calculate the corresponding value of the rate parameter $\lambda$ is derived.
Numerical evidence is presented that the value of the median exhibits many scaling properties,
which permit one to formulate parameterizations of the value of the median in various domains of the parameter space $(k,\lambda)$.
In all cases, the relevant quantities to calculate are $\nu/k$ and $\mu/k$, where $\mu$ is the mean.
\end{abstract}

\vskip 0.25in

\begin{keyword}
% keywords here, in the form: keyword \sep keyword
Poisson distribution of order $k$
\sep median
\sep scaling law
\sep asymptotic formulas
\sep Compound Poisson distribution  
\sep discrete distribution 

%\vskip 0.25in
% MSC2020 codes here, in the form: \MSC code \sep code
\MSC[2020]{
%primary 
60E05  % Probability distributions: general theory
\sep 39B05 % General theory of functional equations and inequalities
\sep 11B37  % recurrences
\sep 05-08  % Computational methods for problems pertaining to combinatorics
}

%\vskip 0.25in
% PACS codes here, in the form: \PACS code \sep code
%\PACS{
%02.20.Qs % general properties, structure, and representation of Lie groups
%\sep 02.20.Hj % classical groups
%\sep 02.30.Gp % special functions
%\sep 29.27.Hj % polarized beams
%\sep 29.20.D- % cyclic accelerators and storage rings
%\sep 29.20.db % storage rings and colliders
%\sep 29.27.-a   %Beams in particle accelerators  
%\sep 41.85.-p % beam optics
%\sep 02.30.Ik %Integrable systems 
%\sep 02.60.Lj % Ordinary and partial differential equations; %boundary value problems
%\sep 13.40.Em % Electric and magnetic moments
%}

\end{keyword}

\end{frontmatter}

\newpage
\setcounter{equation}{0}
\section{\label{sec:intro} Introduction}
The Poisson distribution of order $k$ is a special case of a compound Poisson distribution introduced by Adelson \cite{Adelson1966}.
For a (possibly infinite) tuple $\bm{a}=(a_1,a_2,\dots)$ and $x\in\mathbb{R}$, the probability generating function (pgf)
of a compound Poisson distribution is (eq.~(1) in \cite{Adelson1966})
\bq
\label{eq:Adelson_pgf}
f(\bm{a},x) = \exp\Bigl(-\sum_i a_i\Bigr)\exp\Bigl(\sum_i a_ix^i\Bigr) \,.
\eq
The Poisson distribution of order $k$ is the special case where $a_1=\dots=a_k=\lambda$, where $\lambda>0$, and all the other $a_i$ are zero.
For $k=1$ it is the standard Poisson distribution.
Although exact expressions for the mean and variance of the Poisson distribution of order $k$ are known \cite{PhilippouMeanVar},
exact results for its median and mode are difficult to obtain.
We denote the mean and variance by $\mu_k(\lambda)$ and $\sigma^2_k(\lambda)$, respectively.
Their values are as follows (from \cite{PhilippouMeanVar}, with correction of a misprint)
\begin{subequations}
\begin{align}
\label{eq:mean_Poisson_orderk}
\mu_k(\lambda) &= (1+\dots+k)\lambda = \frac12\,k(k+1)\lambda \,.
\\
\label{eq:variance_Poisson_orderk}
\sigma_k^2(\lambda) &= (1^2+\dots+k^2)\lambda = \frac16\,k(k+1)(2k+1)\lambda \,.
\end{align}
\end{subequations}
We denote the median and mode by $\nu_k(\lambda)$ and $m_k(\lambda)$, respectively.
Some exact results and also upper/lower bounds for the mode have been published in \cite{PhilippouFibQ,GeorghiouPhilippouSaghafi,KwonPhilippou}.
It was shown in \cite{Mane_Poisson_k_CC23_3} that for fixed $k\ge1$, the median is zero if and only if $\lambda \le (\ln2)/k$ (this result is well-known for $k=1$).
In a recent note \cite{Mane_Poisson_k_CC23_3},
the author presented conjectured expressions for the median and the mode of the Poisson distribution of order $k$, for fixed $k\ge2$ and sufficiently large values of $\lambda$.
First define the parameter $\kappa=k(k+1)/2$.
Fix the value of the mean to be an integer $\mu_k(\lambda)=\kappa\lambda=n$.
Numerical studies reported in \cite{Mane_Poisson_k_CC23_3} indicate that the value of the median and mode are respectively
\begin{subequations}
\begin{align}
\label{eq:Poisson_k_median_int_n}
\nu_k(n/\kappa) &= n - \biggl\lfloor\frac{k+4}{8}\biggr\rfloor \qquad (n \ge \kappa) \,,
\\
\label{eq:Poisson_k_mode_int_n}
m_k(n/\kappa) &= n - \biggl\lfloor\frac{3k+5}{8}\biggr\rfloor \qquad (n \ge 2\kappa) \,.
\end{align}
\end{subequations}
In this note, we shall study the behavior of the median for $n<\kappa$, i.e.~$0<\lambda<1$.
Given a value for the median in the interval $\nu\in[1,k]$,
an equation to calculate the corresponding value of the rate parameter $\lambda$ is derived.
We shall present numerical evidence that the value of the median exhibits many scaling properties,
which permit one to formulate parameterizations of the value of the median in various domains of the parameter space $(k,\lambda)$.
Particular attention is paid to the interval $\nu\in[1,k]$.

The structure of this paper is as follows.
Sec.~\ref{sec:notation} presents basic definitions and notation employed in this note.
Sec.~\ref{sec:median1k} derives equations and (approximate) solutions for the rate parameter $\lambda$ when the median lies in the interval $\nu\in[1,k]$.
Sec.~\ref{sec:median_eq_k} analyzes the case where the median equals $k$.
Secs.~\ref{sec:median_scaling1} and \ref{sec:median_scaling2} present graphical evidence for formulas to
parameterize the value of the median in various domains of the parameter space $(k,\lambda)$.
Sec.~\ref{sec:conc} concludes.

\newpage
\setcounter{equation}{0}
\section{\label{sec:notation}Basic notation and definitions}
We have already introduced the parameter $\kappa=k(k+1)/2$.
We denote the mean by $\mu$, the median by $\nu$ and the mode by $m$ (with pertinent subscripts, etc.~to denote the dependence on $k$ and $\lambda$, see below).
For the median, we follow the exposition in \cite{AdellJodraPoisson2005}:
if $Y_{k,\lambda}$ is a random variable which is Poisson distributed with order $k$ and parameter $\lambda$,
the median is defined as the smallest integer $\nu$ such that $P(Y_{k,\lambda} \le \nu) \ge \frac12$.
With this definition, the median is unique and is always an integer.
For fixed $k$, the value of the median increases in unit steps as the value of $\lambda$ increases.
The mode is defined as the location(s) of the {\em global maximum} of the probability mass function.
It is known that the mode may not be unique.
For the standard Poisson distribution with parameter $\lambda$, the mode equals $\lfloor\lambda\rfloor$ if $\lambda\not\in\mathbb{N}$,
but both $\lambda-1$ and $\lambda$ are modes if $\lambda\in\mathbb{N}$.
Unlike the median, for fixed $k$, the value of the mode can increase by more than unity as the value of $\lambda$ increases.
See results in \cite{PhilippouFibQ,KwonPhilippou,Mane_Poisson_k_CC23_5}.

We adopt the notation by Kostadinova and Minkova \cite{KostadinovaMinkova2013}
and denote the probability mass function by $p_n = P(Y_{k,\lambda}=n)$.
The above authors published the following expression for the pmf, in terms of combinatorial sums
(Theorem 1 in \cite{KostadinovaMinkova2013}, with slight changes of notation)
\bq
\label{eq:KM_Thm1}
\begin{split}
  p_0 &= e^{-k\lambda} \,,
  \\
  p_n &= e^{-k\lambda}\,\sum_{j=1}^n \binom{n-1}{j-1}\,\frac{\lambda^j}{j!} \qquad (n=1,2,\dots,k) \,,
  \\
  p_n &= e^{-k\lambda}\,\biggl[\,\sum_{j=1}^n \binom{n-1}{j-1}\,\frac{\lambda^j}{j!}
  \;\;-\;\; \sum_{i=1}^\ell (-1)^{i-1}\,\frac{\lambda^i}{i!} \sum_{j=0}^{n-i(k+1)} \binom{n - i(k+1) +i-1}{j+i-1}\,\frac{\lambda^j}{j!} \,\biggr]
  \\
  &\qquad\qquad (n = \ell(k+1)+m,\, m=0,1,\dots,k,\, \ell=1,2,\dots,\infty) \,.
\end{split}
\eq
See also an alternative combinatorial sum in \cite{Mane_Poisson_k_CC23_10}, for the case $n>k$.

\newpage
\setcounter{equation}{0}
\section{\label{sec:median1k}Equation for median}
As stated above, it was shown in \cite{Mane_Poisson_k_CC23_3} that for fixed $k\ge1$,
the median is zero if and only if $\lambda \le (\ln2)/k$, a result which is well-known for $k=1$.
We treat fixed $k\ge2$ below and vary the value of $\lambda$.
Denote the median simply by $\nu$ for brevity.
In this section, we consider the case where the median takes values in the interval $\nu\in[1,k]$.

\begin{proposition}
  For fixed $k\ge2$ and $\nu\in[0,k]$, the equation for $\lambda$ to determine the median is as follows
\bq  
\label{eq:eq_median_1k}
  \frac{e^{k\lambda}}{2} = \sum_{j=0}^\nu \binom{\nu}{j} \frac{\lambda^j}{j!} \,.
\eq
\end{proposition}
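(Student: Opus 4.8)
The plan is to start from the definition of the median and convert the defining inequality $P(Y_{k,\lambda}\le\nu)\ge\frac12$ into the boundary equality $P(Y_{k,\lambda}\le\nu)=\frac12$, which characterizes the value of $\lambda$ at which $\nu$ becomes the median. First I would write out the cumulative probability $\sum_{n=0}^{\nu}p_n$ using the explicit pmf in \eqref{eq:KM_Thm1}. The essential observation is that $\nu\le k$, so every term $p_0,p_1,\dots,p_\nu$ is given by the \emph{simple} form $p_n=e^{-k\lambda}\sum_{j=1}^{n}\binom{n-1}{j-1}\lambda^j/j!$ (together with $p_0=e^{-k\lambda}$), and the correction double-sum that appears only for $n>k$ never enters the partial sum. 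Hence
\begin{equation*}
P(Y_{k,\lambda}\le\nu)=e^{-k\lambda}\Biggl[1+\sum_{n=1}^{\nu}\sum_{j=1}^{n}\binom{n-1}{j-1}\frac{\lambda^j}{j!}\Biggr].
\end{equation*}

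Next I would set this equal to $\frac12$ and multiply through by $e^{k\lambda}$, obtaining $\tfrac12 e^{k\lambda}=1+\sum_{n=1}^{\nu}\sum_{j=1}^{n}\binom{n-1}{j-1}\lambda^j/j!$. The task then reduces to the purely combinatorial claim that this double sum equals $\sum_{j=1}^{\nu}\binom{\nu}{j}\lambda^j/j!$, after which the leading $1=\binom{\nu}{0}$ merges with the $j=0$ term and reproduces \eqref{eq:eq_median_1k}. To establish the combinatorial claim I would interchange the order of summation: for fixed $j$ the outer index $n$ runs from $j$ to $\nu$, so the double sum becomes $\sum_{j=1}^{\nu}\frac{\lambda^j}{j!}\sum_{n=j}^{\nu}\binom{n-1}{j-1}$.

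The key step is then the hockey-stick identity $\sum_{n=j}^{\nu}\binom{n-1}{j-1}=\binom{\nu}{j}$, equivalently $\sum_{i=j-1}^{\nu-1}\binom{i}{j-1}=\binom{\nu}{j}$ after the shift $i=n-1$, which collapses the inner sum and delivers exactly $\sum_{j=1}^{\nu}\binom{\nu}{j}\lambda^j/j!$. Recombining with the $j=0$ term $\binom{\nu}{0}=1$ completes the derivation of \eqref{eq:eq_median_1k}.

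I do not expect a serious obstacle; once the correct pmf form is used the computation is essentially bookkeeping plus one standard binomial identity. The one point demanding care is the restriction $\nu\le k$: it guarantees that none of the probabilities in the partial sum require the $n>k$ correction terms, which is precisely why the result takes such a clean form and why the stated range of validity is $\nu\in[0,k]$. A secondary point worth flagging explicitly is the reading of \eqref{eq:eq_median_1k} as the threshold condition $P(Y_{k,\lambda}\le\nu)=\frac12$, i.e.\ the value of $\lambda$ at which the median attains the integer value $\nu$; the derivation proper is for integer $\nu$, and one interprets the binomial coefficients as generalized (falling-factorial) coefficients only when interpolating to non-integer $\nu$.
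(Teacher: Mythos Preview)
Your proposal is correct and follows essentially the same route as the paper: write the cumulative probability using the simple pmf formula valid for $n\le k$, set it equal to $\tfrac12$, interchange the order of summation, and collapse the inner sum with the hockey-stick identity $\sum_{n=j}^{\nu}\binom{n-1}{j-1}=\binom{\nu}{j}$. The paper's proof is terser but step-for-step identical; your explicit discussion of why the restriction $\nu\le k$ is needed and your naming of the hockey-stick identity add clarity without changing the argument.
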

\begin{proof}
For $\nu\in[1,k]$, we employ eq.~\eqref{eq:KM_Thm1} to obtain the following equation for $\lambda$
\bq  
\begin{split}
  \frac{e^{k\lambda}}{2} &= 1 + \sum_{s=1}^\nu p_s
  \\
  &= 1 + \sum_{s=1}^\nu \sum_{j=1}^s \binom{s-1}{j-1} \frac{\lambda^j}{j!} 
  \\
  &= 1 + \sum_{j=1}^\nu \frac{\lambda^j}{j!} \biggl(\sum_{s=j}^\nu \binom{s-1}{j-1}\biggr)
  \\
  &= 1 + \sum_{j=1}^\nu \binom{\nu}{j} \frac{\lambda^j}{j!}
  \\
  &= \sum_{j=0}^\nu \binom{\nu}{j} \frac{\lambda^j}{j!} \,.
\end{split}
\eq
For $\nu=0$ the result is obvious, the equation to solve is simply $\frac12 e^{k\lambda}=1$.  
\end{proof}
\begin{remark}
  The equation for $\nu>k$ will not be discussed in this note.
  The expression for the sum $\sum_{s=1}^\nu p_s$ requires terms from the last line in eq.~\eqref{eq:KM_Thm1}, which contains complicated subtractions.
\end{remark}
\noindent
Even for $\nu=1$, eq.~\eqref{eq:eq_median_1k} has no simple solution.
The equation for $\nu=1$ is
\bq
\frac{e^{k\lambda}}{2} = 1 + \lambda \,.
\eq
The solution for $\lambda$ can be expressed in terms of the Lambert $W$ function,
but that cannot be expressed in terms of elementary functions.
We derive an iterative solution for $\lambda$ as follows.
Take the logarithm to obtain
\bq
k\lambda = \ln2 +\ln(1+\lambda) = \ln2 +\lambda -\frac{\lambda^2}{2} +\frac{\lambda^3}{3} +\dots
\eq
Rearrange to obtain
\bq  
\lambda = \frac{\ln2}{k-1} -\frac{\lambda^2}{2(k-1)} +\frac{\lambda^3}{3(k-1)} +\dots
\eq
Treat the terms on the right as small quantities and iterate to obtain a first approximation
\bq
\lambda \simeq \frac{\ln2}{k-1} \,.  
\eq
Iterate again to obtain
\bq
\lambda \simeq \frac{\ln2}{k-1} -\frac{(\ln2)^2}{2(k-1)^3} +\frac{(\ln2)^3}{3(k-1)^4} +\dots
\eq
And so on.
The higher order terms are more and more negligible as the value of $k$ increases.
In \cite{Mane_Poisson_k_CC23_3}, the solution for $\lambda$ for the case $\nu=0$ was denoted by $\lambda_* = (\ln2)/k$.
Extend that notation to $\lambda_{0,*}$ and denote the value of $\lambda$ in general by $\lambda_{\nu,*}$.
For $\nu=1$, the value of $\lambda_{1,*}$ for $k\gg1$ is approximately
\bq
\lambda_{1,*} \simeq \frac{\ln2}{k-1} = \frac{\ln2}{k}\biggl(1 +\frac{1}{k} +\frac{1}{k^2} +\dots\biggr) \,.
\eq
\begin{remark}
Proceeding in this way, we can obtain an iterative solution for $\lambda_{\nu,*}$ for $\nu=2$, etc.
For $k\gg1$ the solution is approximately
\bq
\lambda_{\nu,*} \simeq \frac{\ln2}{k-\nu} = \frac{\ln2}{k}\biggl(1 +\frac{\nu}{k} +\frac{\nu^2}{k^2} +\dots\biggr) \,.
\eq
For $\nu=0$, the above is an exact solution.
As is necessary, the value of $\lambda_{\nu,*}$ increases with $\nu$ (for fixed $k$).
However, this iterative solution only works for fixed $\nu$.
For $\nu=k$, the iterative solution fails because the right-hand side of eq.~\eqref{eq:eq_median_1k}
then does not contain a fixed number of summands as the value of $k$ changes.
\end{remark}

\newpage
\setcounter{equation}{0}
\section{\label{sec:median_eq_k}Median equals $k$}
Recall from Sec.~\ref{sec:median1k} that $\lambda_{\nu,*}$ is the value of $\lambda$ such that $P(Y_{k,\lambda} \le \nu) = \frac12$.
In this section, the notation ``$\mu$'' denotes the corresponding value of the mean $\mu_{\nu,*}=\kappa\lambda_{\nu,*}$.
The value of $\lambda_{\nu,*}$, thence $\mu_{\nu,*}$, was computed for each value of $\nu$ for fixed $k=20$, for values $\nu=[0,2k]$, i.e.~$\nu\in[0,40]$.
The resulting graph for $\mu$ is displayed in Fig.~\ref{fig:graph_k20_nu_mu}.
Observe that the graph changes shape at $\nu=k$:
it is a curve for $\nu\le k$ and (approximately) straight for $\nu\ge k$.
Similarly shaped graphs were obtained for all other tested values of $k$.
Hence the value $\nu=k$ represents a ``breakpoint'' where the shape of the graph of $\mu$ against $\nu$ changes structure.

In this section, we study the value of the mean $\mu$ for $\nu=k$, as a function of $k$.
Fig.~\ref{fig:graph_nu_eq_k_lambda} plots the of value of $\lambda$ (i.e.~$\lambda_{k,*}$) against $k$, for the case $\nu=k$, for $k\in[2,100]$.
The value of $\lambda$ decreases monitonically with $k$, approximately as $1/k$.
Fig.~\ref{fig:graph_nu_eq_k_mu} displays a plot of the mean $\mu$ against $k$, for the case $\nu=k$, for $k\in[2,10000]$.
The graph is visually a straight line, but it is not exactly so.
We know for the standard Poisson distribution, i.e.~$k=1$,
that given a value of the median $\nu$, the value of the mean $\mu$ such that $P(Y_{k,\lambda} \le \nu) = \frac12$ 
is close to but not exactly an affine function of $\nu$.
See \cite{AdellJodraPoisson2005,Choi,Alm2003,ChenRubin1986} and references therein.
See also the numerical results in \cite{Mane_Poisson_k_CC23_3}, for the Poisson distribution of order $k\ge2$, for the domain $\lambda\ge1$.

Following the analysis in \cite{Mane_Poisson_k_CC23_3}, let us define the difference between the mean and the median as
$\Delta_k = \mu-k$ (because $\nu=k$ in the present case).
Numerical fits to the data yield the following expression
\bq
\label{eq:Deltak_nu_eq_k}
\Delta_k \simeq 0.155752\,k + 0.57765625 - \frac{1}{16k} + \dots
\eq
Needless to say, the coefficients are approximate.
Nevertheless, eq.~\eqref{eq:Deltak_nu_eq_k} indicates that the deviation of $\mu-k$ from an affine function of $k$ is of $O(1/k)$ plus higher order terms.
This confirms the observation in Fig.~\ref{fig:graph_nu_eq_k_lambda} that the value of $\lambda_{k,*}$ decreases approximately as $1/k$.
The result is also consistent with the findings in \cite{Mane_Poisson_k_CC23_3}, which were for the domain $\lambda\ge1$.

Fig.~\ref{fig:graph_nu_eq_k_diff} displays a plot of the residual $\mu-k-\Delta_k$ for the data plotted in Fig.~\ref{fig:graph_nu_eq_k_mu}.
The residual would be zero if the expression for $\Delta_k$ in eq.~\eqref{eq:Deltak_nu_eq_k} were an exact fit to the data.
The value of the residual increases as the value of $k$ increases, which is to be expected from a numerical calculation.
The maximum difference is $\pm0.3$, at the right-hand edge $k=10000$, where $\mu\simeq11800$.
Observe also that the residual is {\em symmetric} around zero, i.e.~the fit in eq.~\eqref{eq:Deltak_nu_eq_k} is {\em unbiased}: it is neither systematically too high nor too low.

\newpage
\setcounter{equation}{0}
\section{\label{sec:median_scaling1}Scaling of median I}
In this section, we report numerical studies for the median in the interval $\nu\in[0,k]$.
Although an explicit equation to determine the value of $\lambda_{\nu,*}$ was given in eq.~\eqref{eq:eq_median_1k},
in general it is too difficult to solve analytically.
Specifically, we calculate (numerically) the value of the mean $\mu_{\nu,*}$ such that $P(Y_{k,\lambda} \le \nu) = \frac12$.

Recall that for $\nu=0$, then $\lambda_{0,*}=(\ln2)/k$, hence the $\mu_{0,*} = \kappa\lambda_{0,*} = \frac12(k+1)\ln2$.
Hence $\mu_{0,*}/(k+1) = \frac12\ln2$. This is an exact result for all $k\ge1$.
Hence we compute the value of $\mu_{\nu,*}/(k+1)$ as a function of $\nu/k$ for $\nu/k\in[0,1]$.
The result is displayed in Fig.~\ref{fig:graph_nu1k_mu}, for selected values of $k$.
Notice that the curves are similar in shape and lie close together.
This suggests that the expression for $\mu/(k+1)$ can be expressed as a series in powers of $\nu/k$:
\bq
\label{eq:nu1l_mu_series}
\frac{\mu_{\nu,*}}{k+1} = a_0 +a_1\frac{\nu}{k} +a_2\frac{\nu^2}{k^2} +a_3\frac{\nu^3}{k^3} +\dots
\eq
Here $a_0 = \frac12\ln2$ is exact and the other coefficients $a_i$ are expressed as series in powers of $1/(k+1)$.
Numerical studies yield the following expressions (rounding to three decimal places)
\bq
\label{eq:nu1l_mu_series_coeffs}
\begin{split}
  a_1 &\simeq 0.335 - \frac{0.014}{k+1} \,,
  \\
  a_2 &\simeq 0.356 + \frac{0.055}{k+1} \,,
  \\
  a_3 &\simeq 0.123 - \frac{0.7}{k+1} \,.
\end{split}
\eq
Fig.~\ref{fig:graph_nu1k_residual} displays a plot of the residual against $\nu/k$ for $\nu\in[0,k]$, for various values of $k$, 
where the residual is (using ``$\Delta$'' without subscript for lack of a better symbol)
\bq
\Delta = \frac{\mu_{\nu,*}}{k+1} -\biggl(a_0 +a_1\frac{\nu}{k} +a_2\frac{\nu^2}{k^2} +a_3\frac{\nu^3}{k^3} \biggr) \,.
\eq
Note the following:
\begin{enumerate}
\item
  The magnitude of $\Delta$ is small over the whole interval $\nu/k\in[0,1]$.
\item
  Other than the curve for $k=100$, the other curves are very close (for $k\ge 500$).
\item
  This suggests that the parameterization in eqs.~\eqref{eq:nu1l_mu_series} and \eqref{eq:nu1l_mu_series_coeffs}
  {\em converges uniformly} in the interval $\nu/k\in[0,1]$, as $k\to\infty$.
\item
  The curve for $k=100$ indicates that additional terms in powers of $1/(k+1)$ are required in the coefficients $a_i$ in eq.~\eqref{eq:nu1l_mu_series_coeffs}.
  Such additional terms are significant for $k=100$ but negligible for larger $k$, consistent with the observed behavior in Fig.~\ref{fig:graph_nu1k_residual}.
\end{enumerate}
\begin{remark}
  The fact that the interval $\nu/k\in[0,1]$ is a unit interval and the curves for $\mu/(k+1)$ are smooth suggests that a parameterization
  using a family of orthogonal polynomials might be a good idea.
  Note however that the value of $\nu$ is an integer, hence $\nu/k$ is actually a discrete valued variable,
  hence cannot necessarily be evaluated exactly at the roots of a Legendre or Chebyshev polynomial, for example.
\end{remark}

\newpage
\setcounter{equation}{0}
\section{\label{sec:median_scaling2}Scaling of median II}
In this section, we study the median for values $\nu\ge k$.
Previously in this note, we fixed the value of the median $\nu$ and
computed the corresponding values for $\lambda$ and the mean $\mu$ such that $P(Y_{k,\lambda} \le \nu) = \frac12$.
Here we shift the paradigm slightly.
We follow the analysis in \cite{Mane_Poisson_k_CC23_3}.
We fix the value of the mean to be an integer, say $\mu=n$ (hence $\lambda=n/\kappa$), and compute the corresponding value of the median $\nu$.
For values $\lambda\ge1$, i.e.~$n\ge\kappa$, a conjectured expression for the median was derived (via numerical studies) in \cite{Mane_Poisson_k_CC23_3}.
See eq.~\eqref{eq:Poisson_k_median_int_n}.
In this section, we treat the domain $\lambda < 1$, i.e.~$n\le\kappa$.
It was stated in \cite{Mane_Poisson_k_CC23_3} that eq.~\eqref{eq:Poisson_k_median_int_n} is not always valid for $\lambda<1$.
Given an integer $n$, let us {\em define} the ``base median'' as the expression in eq.~\eqref{eq:Poisson_k_median_int_n}:
\bq
\label{eq:base_median_def}
\nu_{\rm base} := n - \biggl\lfloor\frac{k+4}{8}\biggr\rfloor \,.
\eq
We compute the actual value of the median $\nu$ and plot the value of the difference $(\nu_{\rm base}-\nu)/k$ against $n/k$, where recall the value of the mean is $\mu=n$.
Fig.~\ref{fig:graph_k100-500} displays the result for $k=100,200,300,400,500$.
Since the value of $\nu_{\rm base}-\nu$ is always an integer, the graphs are clearly step functions for small values of $k$, but smooth out as $k$ increases.
Nevertheless, the shapes of the curves in Fig.~\ref{fig:graph_k100-500} suggest the curves approach the same limiting shape for large $k$.
This is confirmed in Fig.~\ref{fig:graph_k1000-1000_full}, where the value of $(\nu_{\rm base}-\nu)/k$ is plotted against $n/k$ for $k=1000,2000,5000,10000$.
The curves are much closer together and appear to converge to a common curve as $k\to\infty$.
A zoom view of Fig.~\ref{fig:graph_k1000-1000_full} is displayed in Fig.~\ref{fig:graph_k1000-1000_zoom}, to display the curves in more detail, for $n/k \le 5$.

\begin{remark}
  The left edge of the curves in
  Figs.~\ref{fig:graph_k100-500}, \ref{fig:graph_k1000-1000_full} and \ref{fig:graph_k1000-1000_zoom} is set to the value of $n$ at which the median equals zero.
  At this point, $\mu = \frac12(k+1)\ln2$, hence
\bq
  \frac{\nu_{\rm base}}{k} \simeq \frac{\ln2}{2}\frac{k+1}{k} - \frac{k+4}{8k} \simeq \frac{\ln2}{2} - \frac18 \simeq 0.221574 \,.
\eq
This explains why all the curves in Figs.~\ref{fig:graph_k100-500}, \ref{fig:graph_k1000-1000_full} and \ref{fig:graph_k1000-1000_zoom} begin with a value
of approximately $0.22$.
This also indicates that the domain of values $n/k$ actually extends into the region $n<k$,
as evidenced by the values on the horizontal axis in Figs.~\ref{fig:graph_k100-500}, \ref{fig:graph_k1000-1000_full} and \ref{fig:graph_k1000-1000_zoom}.
This goes to show that the value of the median exhibits many scaling properties.
In all cases, the relevant quantities to calculate are $\nu/k$ and $\mu/k$.
\end{remark}

\newpage
\section{\label{sec:conc}Conclusion}
This note analyzed some properties of the median $\nu$ of the Poisson distribution of order $k\ge2$.
Given a value for the median in the interval $\nu\in[1,k]$, an equation to calculate the corresponding value of the rate parameter $\lambda$ was derived.
Numerical evidence was also presented that the value of the median exhibits many scaling properties,
which permit one to formulate parameterizations of the value of the median in various domains of the parameter space $(k,\lambda)$.
In all cases, the relevant quantities to calculate are $\nu/k$ and $\mu/k$, where $\mu$ is the mean.

%\newpage
%\section*{\label{sec:ack}Acknowledgements}

\newpage

\newpage
\begin{figure}[!htb]
\centering
\includegraphics[width=0.75\textwidth]{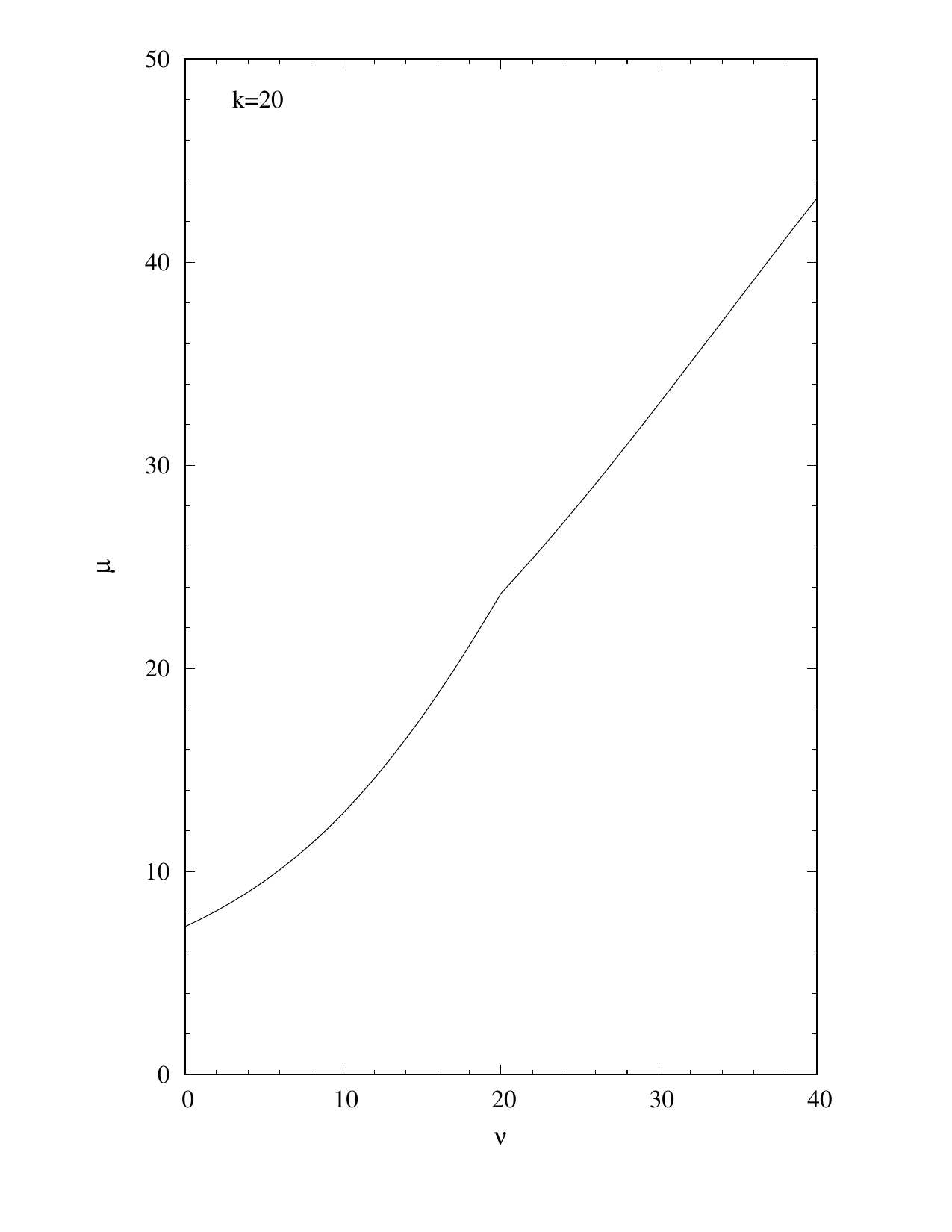}
\caption{\small
\label{fig:graph_k20_nu_mu}
Plot of the mean $\mu$ against the median $\nu$ for $\nu=0,\dots,40$, for fixed $k=20$.}
\end{figure}

\newpage
\begin{figure}[!htb]
\centering
\includegraphics[width=0.75\textwidth]{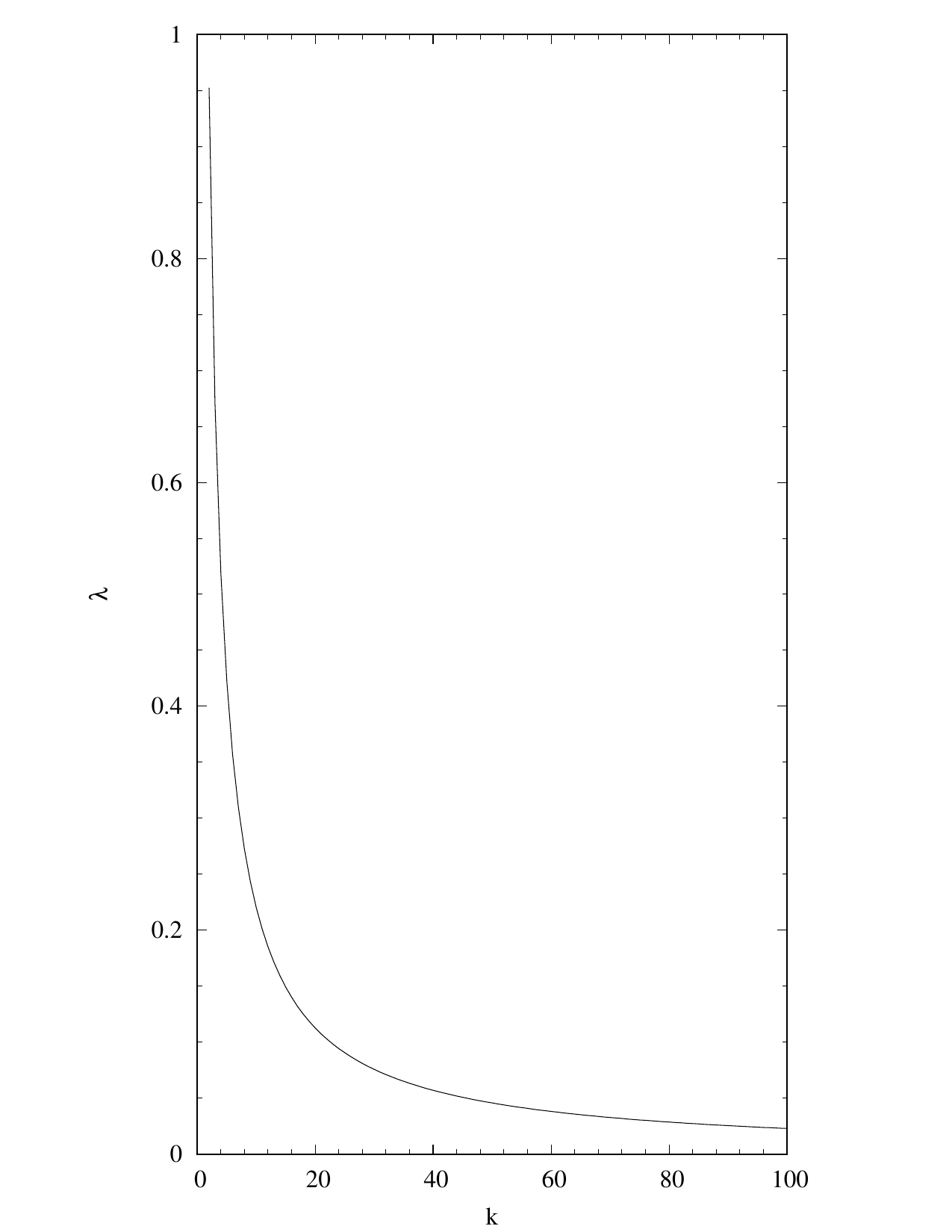}
\caption{\small
\label{fig:graph_nu_eq_k_lambda}
Plot of value of $\lambda$ against $k$, for the case $\nu=k$, for $k\in[2,100]$.}
\end{figure}

\newpage
\begin{figure}[!htb]
\centering
\includegraphics[width=0.75\textwidth]{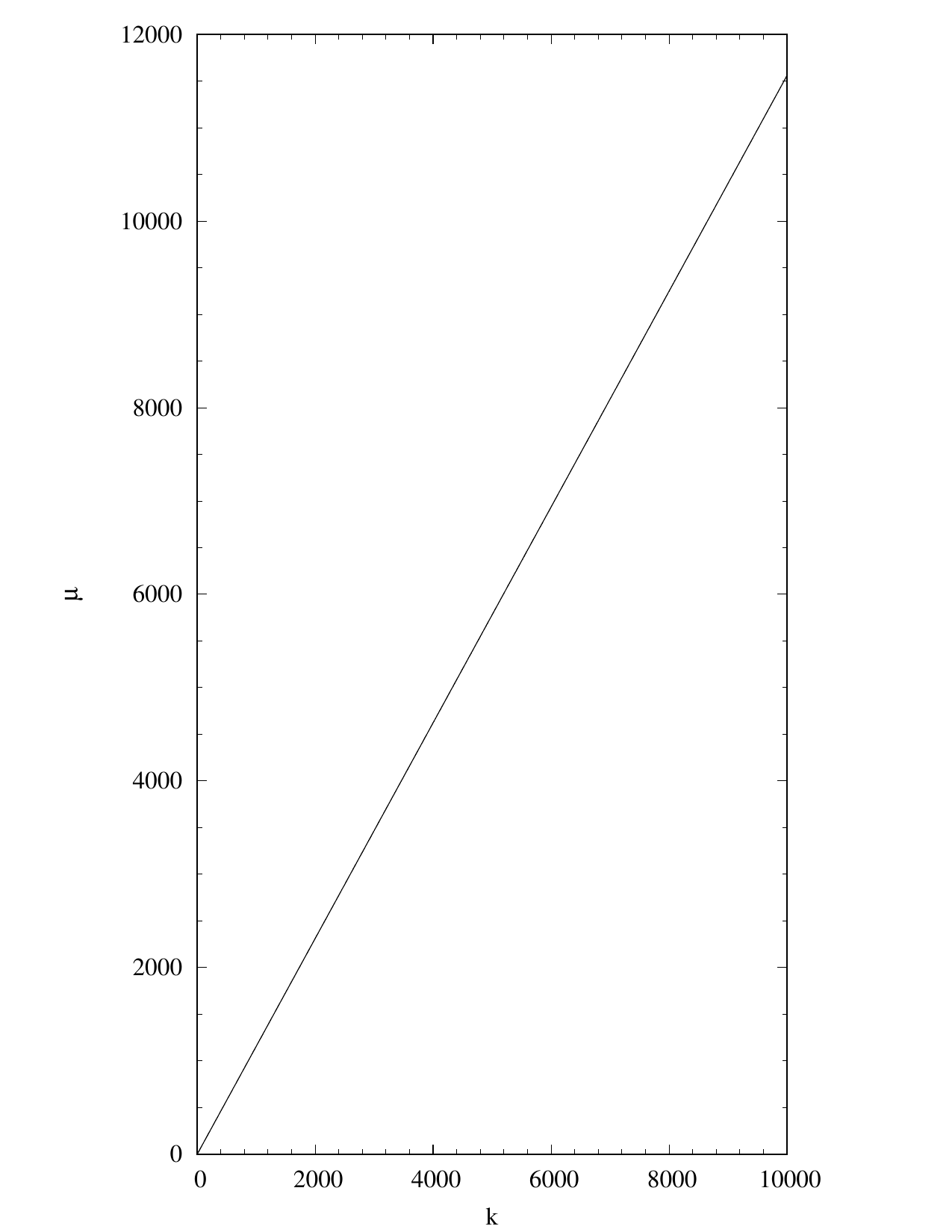}
\caption{\small
\label{fig:graph_nu_eq_k_mu}
Plot of the mean $\mu$ against $k$, for the case $\nu=k$, for $k\in[2,10000]$.}
\end{figure}

\newpage
\begin{figure}[!htb]
\centering
\includegraphics[width=0.75\textwidth]{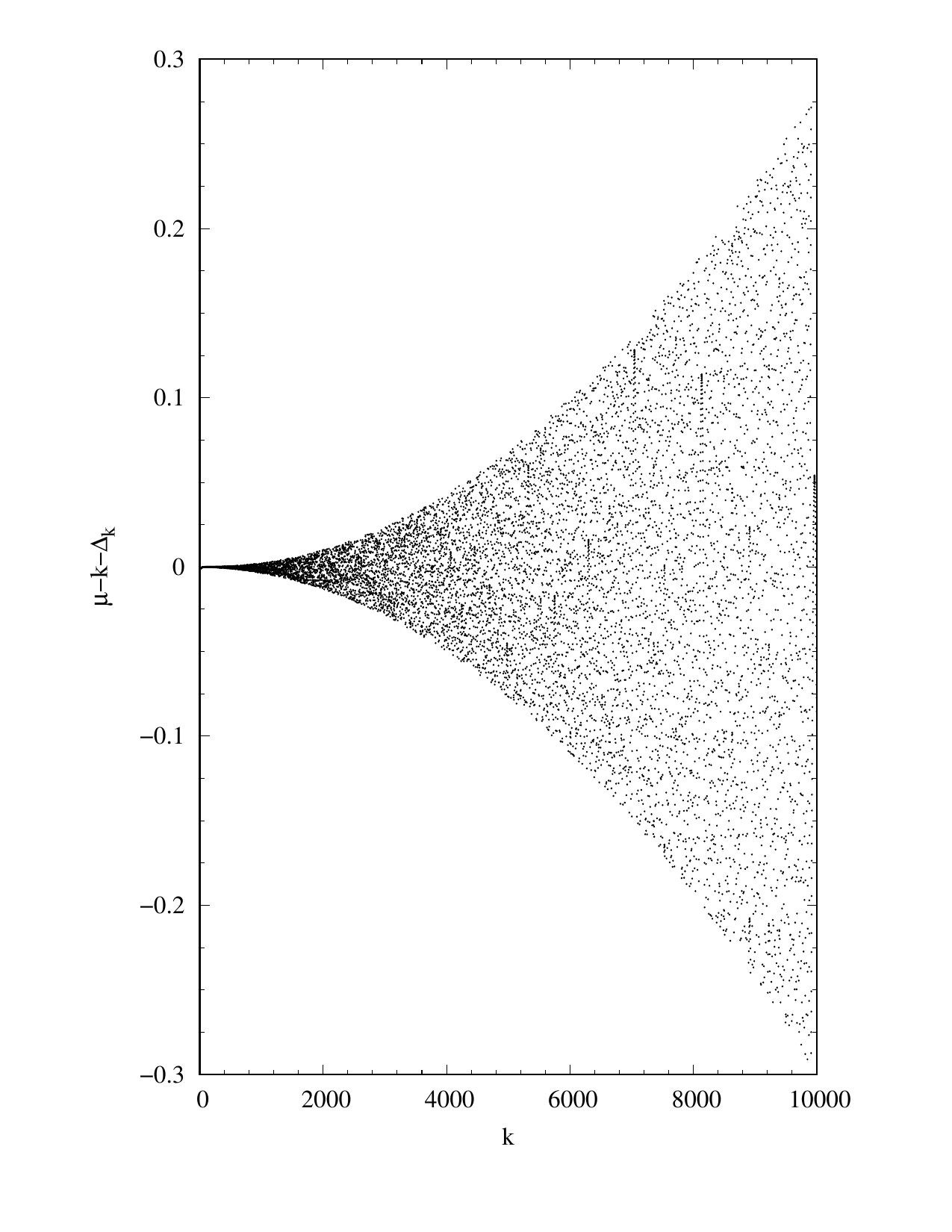}
\caption{\small
\label{fig:graph_nu_eq_k_diff}
Plot of the residual $\mu-k-\Delta_k$ against $k$, for the case $\nu=k$, for $k\in[2,10000]$.}
\end{figure}

\newpage
\begin{figure}[!htb]
\centering
\includegraphics[width=0.75\textwidth]{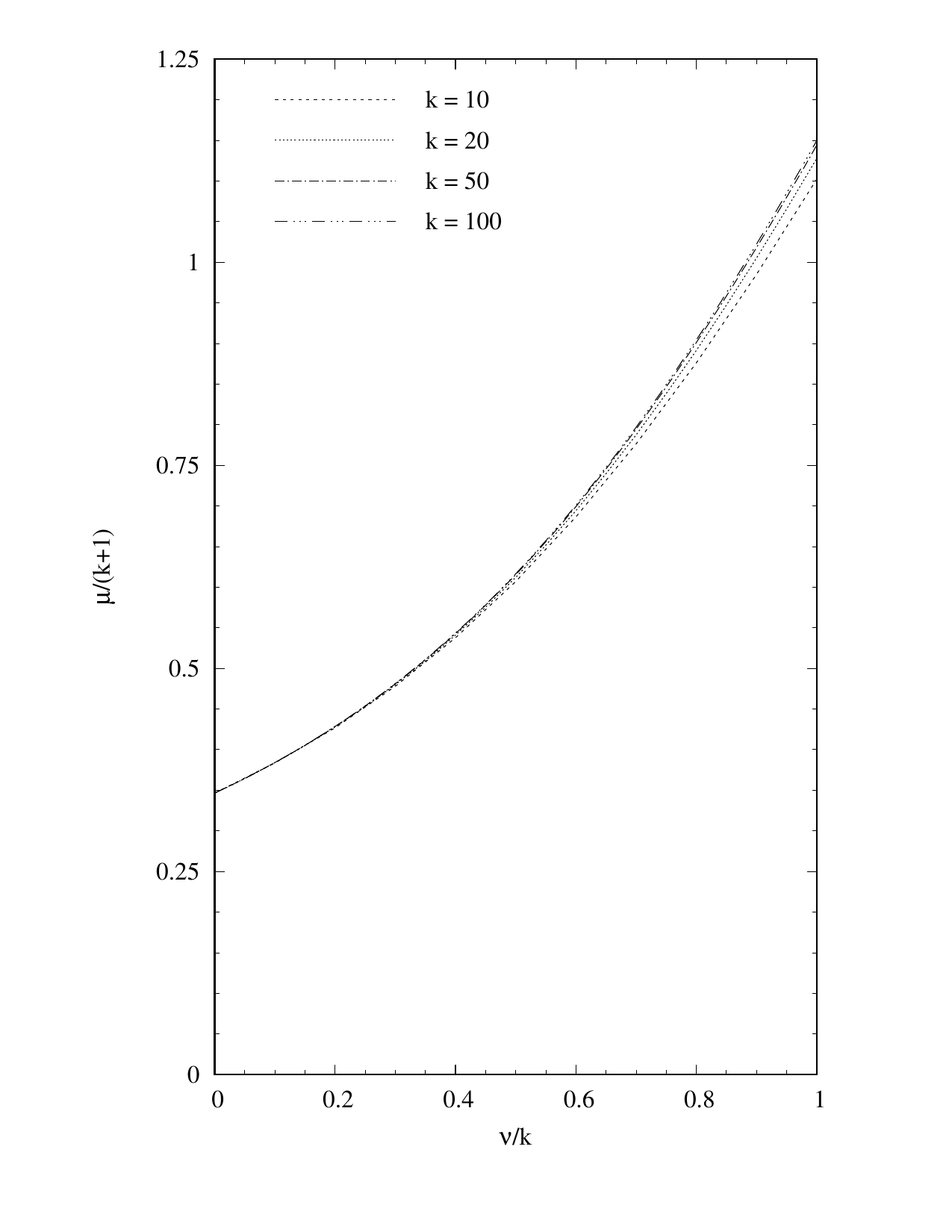}
\caption{\small
\label{fig:graph_nu1k_mu}
Plot of $\mu/(k+1)$ against $\nu/k$, for $\nu/k\in[0,1]$, for selected values of $k$.}
\end{figure}

\newpage
\begin{figure}[!htb]
\centering
\includegraphics[width=0.75\textwidth]{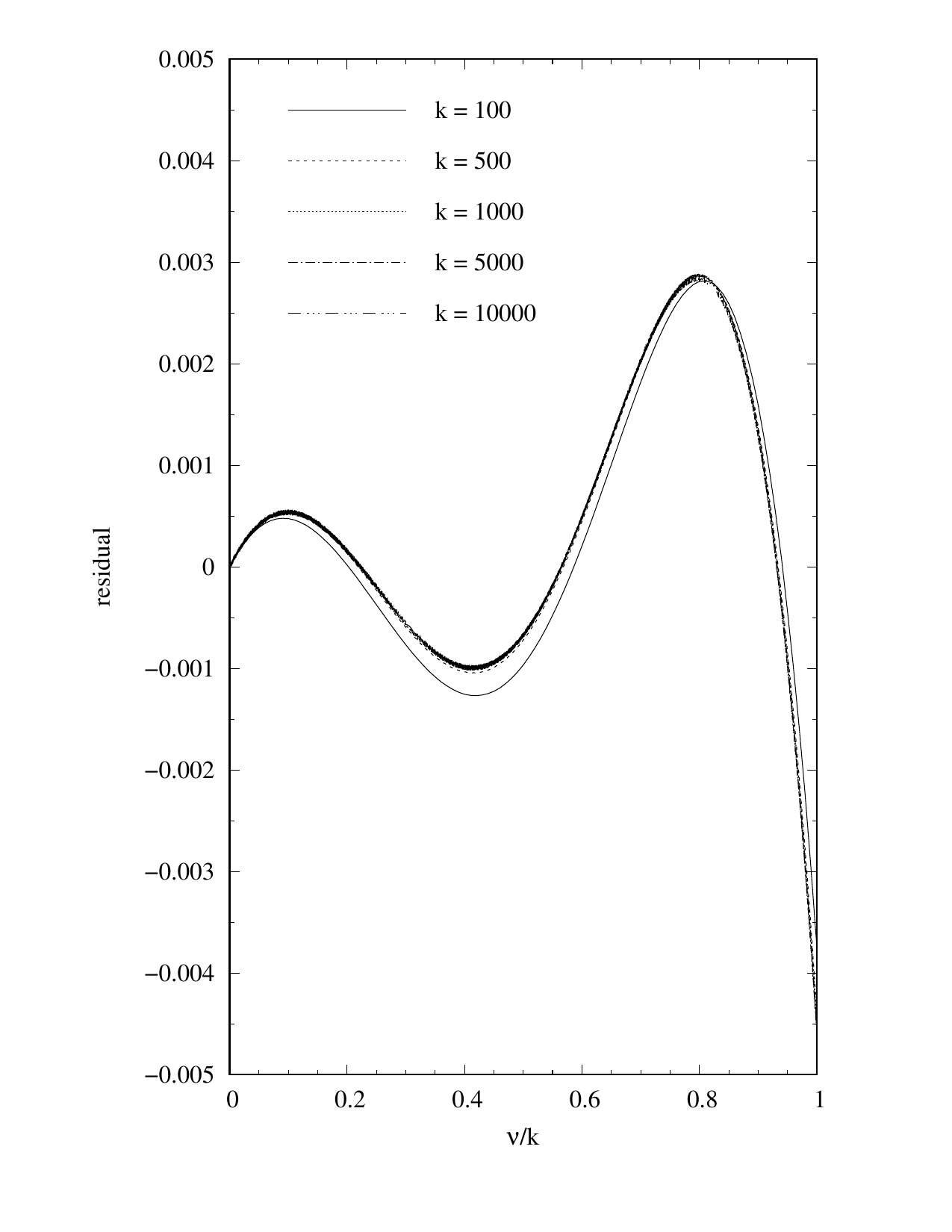}
\caption{\small
\label{fig:graph_nu1k_residual}
Plot of the residual fit for the scaled mean $\mu/(k+1)$ against $\nu/k$, for $\nu/k\in[0,1]$, for selected values of $k$.}
\end{figure}

\newpage
\begin{figure}[!htb]
\centering
\includegraphics[width=0.75\textwidth]{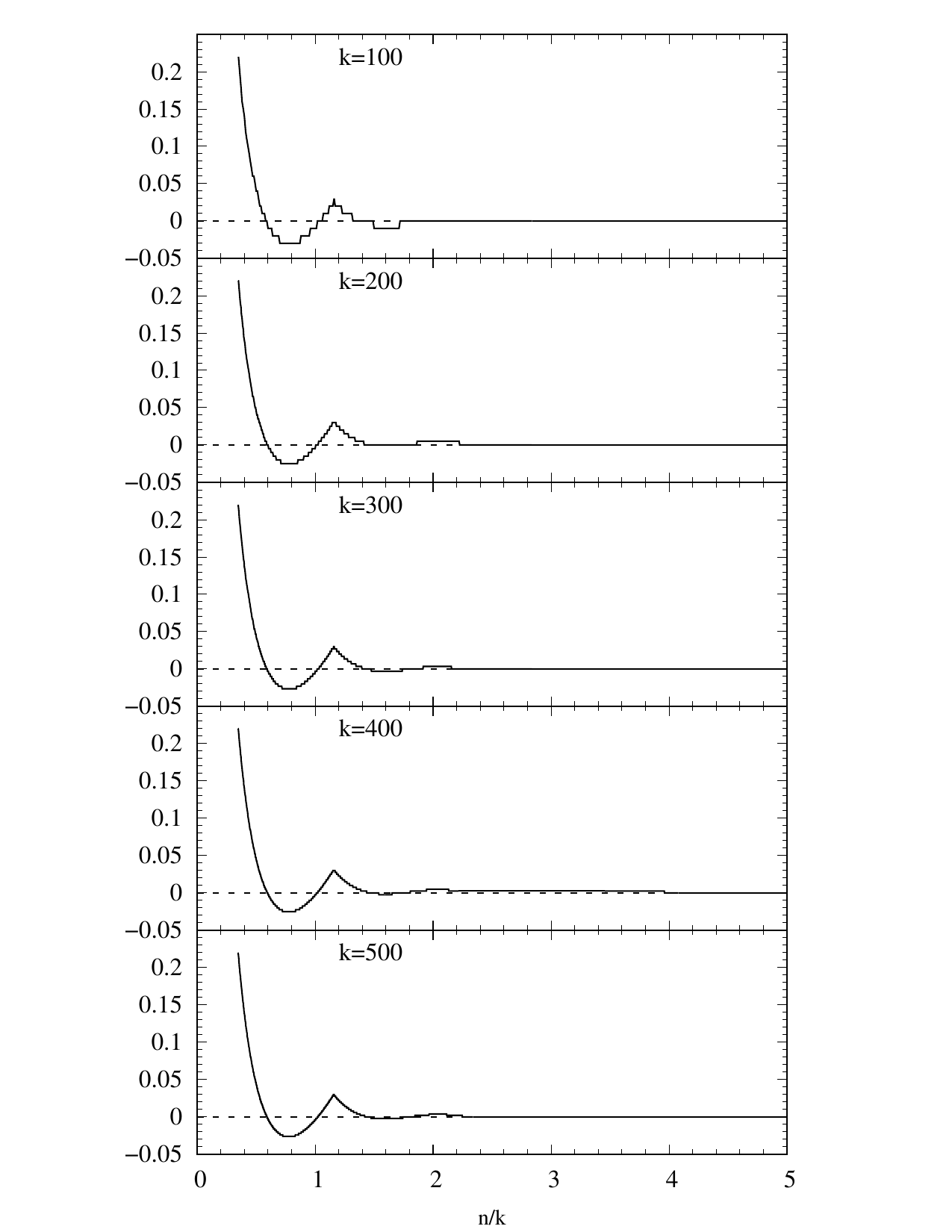}
\caption{\small
\label{fig:graph_k100-500}
Plot of the difference $(\nu_{\rm base}-\nu)/k$ against $n/k$ (where the mean equals $n$) for selected values of $k$.}
\end{figure}

\newpage
\begin{figure}[!htb]
\centering
\includegraphics[width=0.75\textwidth]{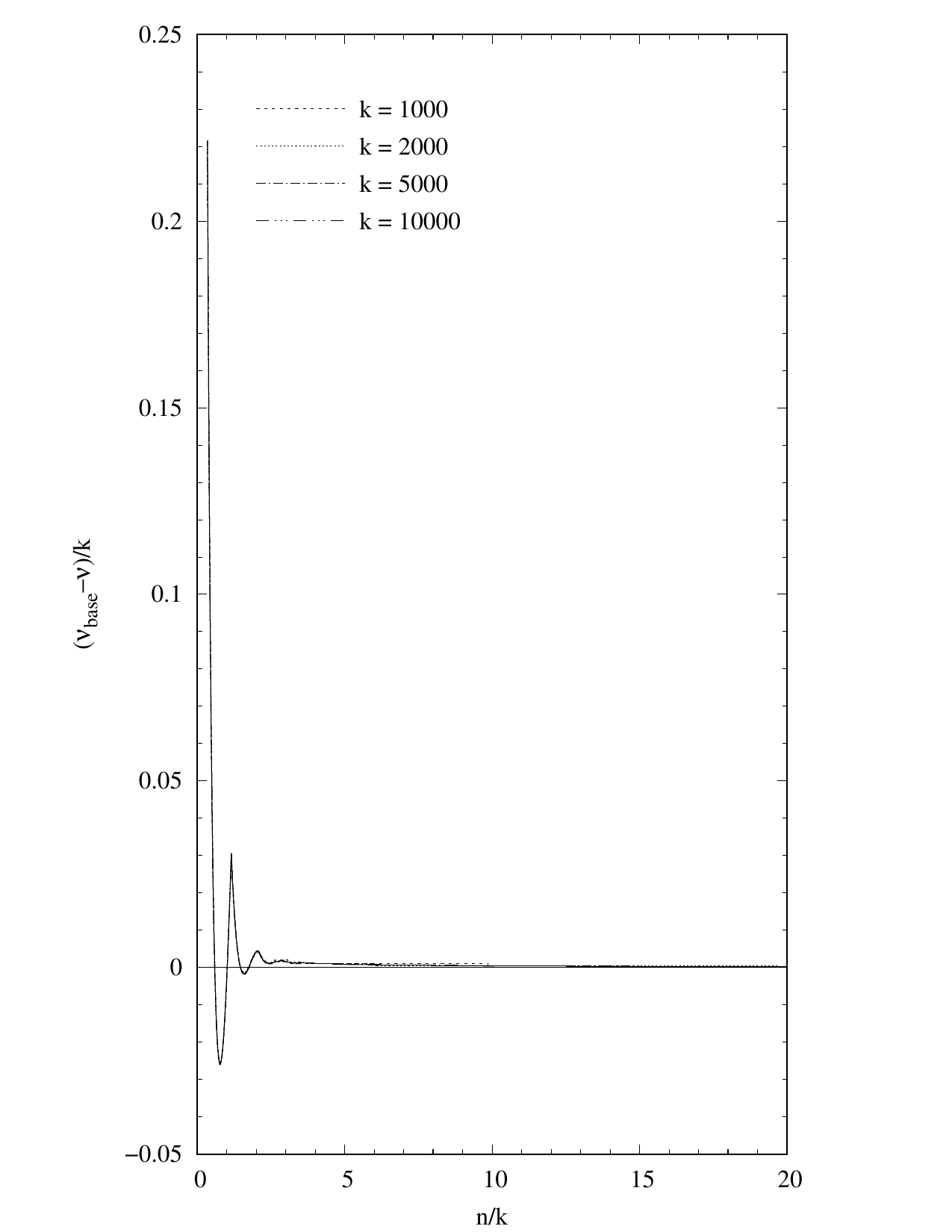}
\caption{\small
\label{fig:graph_k1000-1000_full}
Plot of the difference $(\nu_{\rm base}-\nu)/k$ against $n/k$ (where the mean equals $n$) for selected values of $k$.}
\end{figure}

\newpage
\begin{figure}[!htb]
\centering
\includegraphics[width=0.75\textwidth]{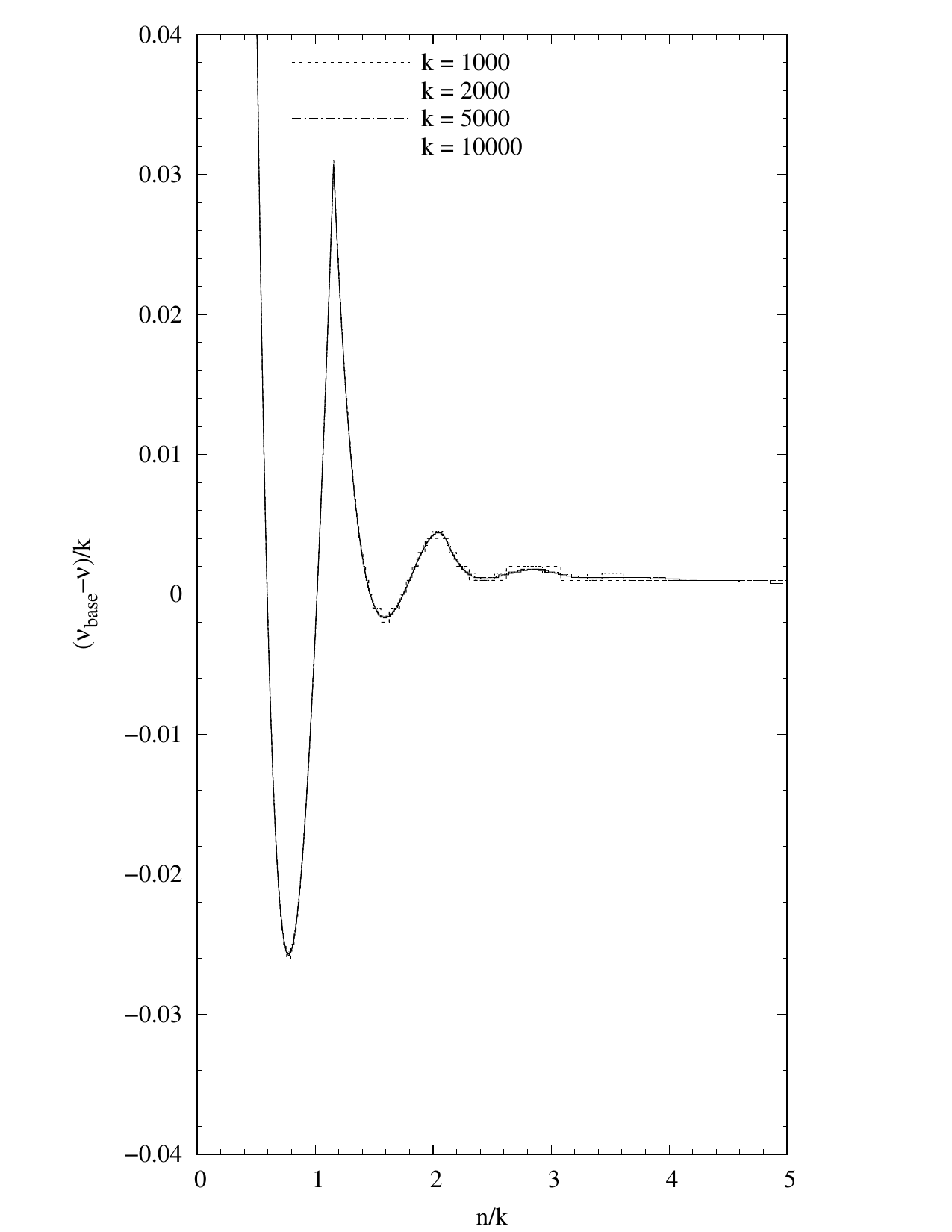}
\caption{\small
\label{fig:graph_k1000-1000_zoom}
Plot of the difference $(\nu_{\rm base}-\nu)/k$ against $n/k$ (where the mean equals $n$) for selected values of $k$.
A zoom view of Fig.~\ref{fig:graph_k1000-1000_full}.}
\end{figure}

\end{document}